\newtheorem{theorem}{Theorem}[section]
\newtheorem{lemma}[theorem]{Lemma}
\newtheorem{proposition}[theorem]{Proposition}
\theoremstyle{definition}
\theoremstyle{remark}
\newtheorem{remark}[theorem]{Remark}
\numberwithin{equation}{section}
\begin{document}

\title[Isometries of a locally compact space with one end]{The group of isometries of a locally compact metric space with one end}

\author[Antonios Manoussos]{Antonios Manoussos}
\address{Fakult\"{a}t f\"{u}r Mathematik, SFB 701, Universit\"{a}t Bielefeld, Postfach 100131, D-33501 Bielefeld, Germany}
\email{amanouss@math.uni-bielefeld.de}
\thanks{During this research the author was fully supported by SFB 701 ``Spektrale Strukturen und
Topologische Methoden in der Mathematik" at the University of Bielefeld, Germany. He would also like to express
his gratitude to Professor H. Abels for his support.}

\subjclass[2010]{Primary 37B05; Secondary 54H20}

\date{}

\keywords{Proper action, pseudo-component, Freudenthal compactification, end-point compactification, ends, $J$-space}

\begin{abstract}
In this note we study the dynamics of the natural evaluation action of the group of isometries $G$ of a locally compact metric space $(X,d)$ with one end. Using the
notion of pseudo-components introduced by S. Gao and A. S. Kechris we show that $X$ has only finitely many pseudo-components exactly one of which is not compact and
$G$ acts properly on this pseudo-component. The complement of the non-compact component is a compact subset of $X$ and $G$ may fail to act properly on it.
\end{abstract}

\maketitle

\section{Preliminaries and the main result}
The idea to study the dynamics of the natural evaluation action of the group of isometries $G$ of a locally compact metric space $(X,d)$ with one end, using the
notion of pseudo-components introduced by S. Gao and A. S. Kechris in \cite{kechris}, came from a paper of E. Michael \cite{michael1}. In this paper he introduced the
notion of a $J$-space, i.e. a topological space with the property that whenever $\{ A,B\}$ is a closed cover of $X$ with $A\cap B$ compact, then $A$ or $B$ is
compact. In terms of compactifications locally compact non-compact $J$-spaces are characterized by the property that their end-point compactification coincides with
their one-point compactification (see \cite[Proposition 6.2]{michael1}, \cite[Theorem 6]{nowi}). Recall that the Freudenthal or end-point compactification of a
locally compact non-compact space $X$ is the maximal zero-dimensional compactification $\varepsilon X$ of $X$. By zero-dimensional compactification of $X$ we here
mean a compactification $Y$ of $X$ such that $Y\setminus X$ has a base of closed-open sets (see \cite{macar}, \cite{nowi}). The points of $\varepsilon X\setminus X$
are the ends of $X$. From the topological point of view locally compact spaces with one end are something very general since the product of two non-compact locally
compact connected spaces is a space with one end (see \cite[Proposition 8]{nowi}, \cite[Proposition 2.5]{michael1}), so it is rather surprising that the dynamics of
the action of the group of isometries $G$ of a locally compact metric space $(X,d)$ with one end has a certain structure as our main result shows.

\begin{theorem}\label{main}
Let $(X,d)$ be a locally compact metric space with one end and let $G$ be its group of isometries. Then

\begin{enumerate}
\item[(i)] $X$ has finitely many pseudo-components exactly one of which  is not compact and $G$ is locally compact.

\item[(ii)] Let $P$ be the non-compact pseudo-component. Then $G$ acts properly on $P$, $X\setminus P$ is a compact subset of $X$ and $G$ may fail to act properly on it.
\end{enumerate}
\end{theorem}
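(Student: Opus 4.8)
\emph{Plan.} Everything reduces to the equivalence (for locally compact non-compact spaces) between ``one end'' and ``$J$-space'', together with an all-or-nothing principle for the dynamics on a single pseudo-component. Throughout I write $G=\mathrm{Isom}(X,d)$ with its usual topology (pointwise convergence, equivalently compact--open; separability of $X$, which I assume, makes $G$ metrizable), and $\rho(x)=\sup\{r\ge 0:\overline{B(x,r)}\text{ is compact}\}$; this function is $1$-Lipschitz, everywhere positive, and $G$-invariant since $g\,\overline{B(x,r)}=\overline{B(gx,r)}$.

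\emph{Statement (i).} First I would observe that the pseudo-components are clopen: each is a neighbourhood of any of its points (two points within $\min\{\rho(x),\rho(y)\}$ of each other lie in the same pseudo-component and $\rho>0$), hence open, hence closed as the complement of the union of the others; so $X$ is a disjoint union of clopen pseudo-components. Now invoke that $X$ is a $J$-space (equivalent to one-endedness, as recalled in the introduction: from a closed cover $\{A,B\}$ with $A\cap B$ compact and neither piece compact, the sets $X\setminus A$ and $X\setminus B$ are disjoint non-empty open sets with non-compact closures, producing at least two ends). This excludes infinitely many pseudo-components — split them into two infinite families to get a clopen partition $X=A\sqcup B$ with both pieces non-compact — and excludes two non-compact ones — take $A$ to be one of them and $B$ its complement. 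Since a non-compact space is not a finite union of compact sets, there is exactly one non-compact pseudo-component $P$, and $X\setminus P$ is a finite union of compact pseudo-components, hence compact. Isometries permute pseudo-components and preserve compactness, so $P$ and $X\setminus P$ are $G$-invariant. Local compactness of $G$ then drops out of (ii): for $x_0\in P$ and $\varepsilon$ small, $\{g:d(gx_0,x_0)\le\varepsilon\}$ is a closed neighbourhood of the identity contained in the compact set $\{g:g\overline{B(x_0,\varepsilon)}\cap\overline{B(x_0,\varepsilon)}\neq\emptyset\}$.

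\emph{Statement (ii).} The geometric core is a \emph{propagation lemma}: for a sequence $(g_n)$ in $G$ and a pseudo-component $Q$, if $\{g_nx:n\}$ is relatively compact for some $x\in Q$ then it is relatively compact for every $x\in Q$. One proves this by walking along a defining chain for the pseudo-component relation, the one-step case being: if $C=\overline{\{g_nx:n\}}$ is compact and $d(x,y)<\rho(x)$, then $\rho\equiv\rho(x)$ on $\{g_nx\}$, hence by continuity on $C$; choosing $d(x,y)<r<\rho(x)$, the closed set $\{w:d(w,C)\le r\}$ is compact (cover $C$ by finitely many small balls and enlarge their radii a bit, staying below $\rho$) and contains every $g_ny$. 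Granting this, take a compact $K\subseteq P$ and $(g_n)\subseteq G$ with $g_nK\cap K\neq\emptyset$; pick $x_n\in K$ with $g_nx_n\in K$ and pass to a subsequence with $x_n\to x$ and $g_nx_n\to q$ (both in $K$). Then $g_nx\to q$ and $g_n^{-1}q\to x$, so by the propagation lemma every orbit of $(g_n)$ and of $(g_n^{-1})$ meets $P$ in a relatively compact set; on the compact invariant set $X\setminus P$ this is automatic, so every orbit of $(g_n)$ and $(g_n^{-1})$ is relatively compact. Equicontinuity of isometries and a diagonal argument give a further subsequence with pointwise limits $g_n\to h$, $g_n^{-1}\to f$, both distance-preserving; interchanging limits in $g_ng_n^{-1}=g_n^{-1}g_n=\mathrm{id}$ yields $f\circ h=h\circ f=\mathrm{id}$, so $h$ and $f$ are onto, i.e.\ lie in $G$, and $g_n\to h$ in $G$. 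Hence $\{g:gK\cap K\neq\emptyset\}$ is sequentially compact, so compact, and $G$ acts properly on $P$. Finally $X\setminus P$ is $G$-invariant and compact, so $G$ acts properly on it precisely when $G$ is compact, and $G$ need not be: take $X=\mathbb{R}^2\sqcup\{c_0\}$ with $d$ equal to the Euclidean metric truncated at $1/2$ on $\mathbb{R}^2$ and $d(c_0,\cdot)\equiv 1/4$; then $X$ has one end, $P=\mathbb{R}^2$, $X\setminus P=\{c_0\}$, and $G=\mathrm{Isom}(\mathbb{R}^2)$ fixes $c_0$, hence is non-compact and acts non-properly on $\{c_0\}$.

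\emph{Main obstacle.} The delicate point is the propagation lemma — this is where the precise definition of pseudo-component does its work — together with the extraction that makes the ``shadow'' $x_n$ genuinely converge to $x$, so that $d(g_nx,g_nx_n)=d(x,x_n)\to 0$ rather than merely staying bounded by $\mathrm{diam}\,K$; this is exactly what prevents orbits from escaping to infinity and forces the limit $h$ to be surjective. By contrast part (i), although only $J$-space bookkeeping, is what produces the compact set $X\setminus P$ that feeds the compact part into the properness argument.
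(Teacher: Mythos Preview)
Your argument follows essentially the same route as the paper: the $J$-space bookkeeping for part~(i), the propagation of relative compactness along a pseudo-component (your ``propagation lemma'' is exactly the claim $C_x\subset K(F)$ inside Proposition~\ref{pseudo}) together with an Arzel\`a--Ascoli/diagonal extraction for part~(ii), and a plane-plus-compact-piece counterexample. Two small points. First, you should not leave separability of $X$ as an assumption: the theorem does not include it, and the paper supplies it (Proposition~\ref{propSep}) via Sierpi\'nski's result that a locally separable metric space decomposes into disjoint open separable pieces, which the $J$-space lemma then forces to be finitely many; without this your passage from sequential compactness to compactness in $G$ is unsupported. Second, the paper reads off local compactness of $G$ from the Gao--Kechris criterion (finitely many pseudo-components), while you derive it from properness on $P$; both are valid, and your derivation is self-contained.
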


Let us now recall some basic notions. Let $(X,d)$ be a locally compact metric space let $G$ be its group of isometries. If we endow $G$ with the topology of pointwise
convergence then $G$ is a topological group (see \cite[Ch. X, \S 3.5 Corollary]{bour2}). On $G$ there is also the topology of uniform convergence on compact subsets
which is the same as the compact--open topology. In the case of a group of isometries these topologies coincide with the topology of pointwise convergence, and the
natural evaluation action of $G$ on $X$, $G\times X\to X$ with $(g,x)\mapsto g(x)$, $g\in G$, $x\in X$ is continuous (see \cite[Ch. X, \S 2.4 Theorem 1 and \S 3.4
Corollary 1]{bour2}). An action by isometries is proper if and only if the limit sets $L(x)=\{ y\in X\,|\, \mbox{there exists a net}\, \{g_{i}\}\, \mbox{in}\,\,
G\,\,\mbox{with}\,\, g_{i}\to\infty\,\, \mbox{and}\,\, \lim g_{i}x=y\}$ are empty for every $x\in X$, where $g_i\to\infty$ means that the net $\{ g_i\}$ has no
cluster point in $G$ (see \cite{clopen}). A few words about pseudo-components. They were introduced by S. Gao and A. S. Kechris in \cite{kechris} and we used them in
\cite{clopen} to study the dynamics of the action of the group of isometries of a locally compact metric space. For the convenience of the reader we repeat what a
pseudo-component is. For each point $x\in X$ we define the radius of compactness $\rho (x)$ of $x$ as $\rho (x):=\sup \{\, r>0\, |\,\,B(x,r)\,\,\mbox{ has compact
closure}\}$ where $B(x,r)$ denotes the open ball centered at $x\in X$ with radius $r>0$. If $\rho (x)=+\infty$ for some $x\in X$ then every ball has compact closure
(i.e. $X$ has the Heine-Borel property), hence $\rho (x)=+\infty$ for every $x\in X$. In case where $\rho (x)$ is finite for some $x\in X$ then the radius of
compactness is a Lipschitz function \cite[Proposition 5.1]{kechris}. It is also easy to see that $\rho (gx)=\rho (x)$ for every $g\in G$. We define next an
equivalence relation $\mathcal{E}$ on $X$ as follows: Firstly we define a directed graph $\mathcal{R}$ on $X$ by $x\mathcal{R} y$ if and only if $d(x,y)<\rho (x)$.
Let $\mathcal{R}^*$ be the transitive closure of $\mathcal{R}$, i.e. $x\mathcal{R}^* y$ if and only if for some $u_0=x, u_1, \ldots, u_n=y$ we have $u_i\mathcal{R}
u_{i+1}$ for every $i<n$. Finally, we define the following equivalence relation $\mathcal{E}$ on $X$: $x\mathcal{E} y$ if and only if $x=y$ or $(x\mathcal{R}^* y$ and
$y\mathcal{R}^* x)$. We call the $\mathcal{E}$-equivalence class of $x\in X$ the pseudo-component of $x$, and we denote it by $C_x$. It follows that pseudo-components
are closed-open subsets of $X$, see \cite[Proposition 5.3]{kechris} and $gC_x=C_{gx}$ for every $g\in G$.

Before we give the proof of Theorem \ref{main} we need some results that may be of independent interest.

\begin{lemma} \label{lemClopPart}
Let $X$ be a non-compact $J$-space and let $\mathcal{A}=\{ A_i,\, i\in I\}$ be a partition of $X$ with closed-open non-empty sets. Then $\mathcal{A}$ contains only
finitely many sets exactly one of which is not compact; its complement is a compact subset of $X$.
\end{lemma}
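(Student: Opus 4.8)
The plan is to use the $J$-space hypothesis only in its simplest incarnation --- on clopen bipartitions of $X$ --- and to bootstrap from there to control the whole partition $\mathcal{A}$. The starting observation is a dichotomy: for \emph{any} clopen $U\subseteq X$, the family $\{U,\,X\setminus U\}$ is a closed cover of $X$ whose intersection is empty, hence compact, so the $J$-space property forces one of $U$, $X\setminus U$ to be compact; and since $X$ is not compact, exactly one of them is. In particular, for every $i\in I$ either $A_i$ or $X\setminus A_i$ is compact, and not both.

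Granting this, I would first show that at most one $A_i$ is non-compact: if $A_{i_1},A_{i_2}$ were both non-compact with $i_1\ne i_2$, then by the dichotomy $X\setminus A_{i_1}$ is compact, while $A_{i_2}\subseteq X\setminus A_{i_1}$ is closed in $X$, so $A_{i_2}$ would be compact --- a contradiction. Next I would show $I$ is finite. If not, partition $I=I_1\sqcup I_2$ into two infinite subsets and put $B_k=\bigcup_{i\in I_k}A_i$; each $B_k$ is clopen with $B_2=X\setminus B_1$, so by the dichotomy some $B_k$ is compact. But $\{A_i\}_{i\in I_k}$ is a cover of $B_k$ by pairwise disjoint, nonempty, relatively open sets, so compactness yields a finite subcover, and disjointness then forces all but finitely many $A_i$, $i\in I_k$, to be empty --- impossible since $I_k$ is infinite.

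Finally, with $I$ finite we have $X=\bigsqcup_{i\in I}A_i$; not every $A_i$ can be compact, for otherwise $X$ would be a finite union of compact sets and hence compact. Combined with the ``at most one'' step, exactly one member $A_{i_0}$ of $\mathcal{A}$ is non-compact, and $X\setminus A_{i_0}=\bigsqcup_{i\ne i_0}A_i$ is a finite union of compact sets, hence compact --- which is the last assertion.

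The only step I expect to require a genuine idea is the finiteness of $I$: the $J$-space axiom constrains only two-set closed covers, so the maneuver of lumping the partition classes into two clopen ``halves'' --- and then using compactness to rule out an infinite family of pairwise disjoint nonempty relatively open pieces --- is really the heart of the matter. Everything else is routine bookkeeping once the clopen dichotomy of the first paragraph is in hand.
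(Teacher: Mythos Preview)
Your proof is correct and follows essentially the same approach as the paper: both hinge on the clopen dichotomy (for any clopen $U$, exactly one of $U$, $X\setminus U$ is compact) and on lumping infinitely many partition classes into two clopen halves to force a contradiction with compactness. The only difference is organizational---the paper first proves existence of a non-compact member via the even/odd splitting and then reads off uniqueness and finiteness from the compactness of its complement, whereas you establish ``at most one,'' ``finitely many,'' and ``at least one'' as separate steps---but the underlying ideas are identical.
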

\begin{proof}
We show firstly that there exists a set in $\mathcal{A}$ which is not compact. We argue by contradiction. Assume that every set $B\in \mathcal{A}$ is compact. Then
$\mathcal{A}$ contains infinitely many distinct sets because otherwise $X$ must be a compact space. Let $\{ B_n,\, n\in\mathbb{N}\}\subset \mathcal{A}$ with $B_n\neq
B_k$ for $n\neq k$ (i.e. $B_n\cap B_k=\emptyset$). The sets $\displaystyle{D=:\bigcup_{n=1}^{+\infty} B_{2n-1}}$ and $X\setminus D$ are open (since $X\setminus D$ is
a union of elements of $\mathcal{A}$) and disjoint so they form a closed partition of $X$. Hence, one of them must be compact. This is a contradiction because both
$D$ and $X\setminus D$ are an infinite disjoint union of open sets.

Fix a non-compact $P\in \mathcal{A}$. Since $P$ is a closed-open subset of $X$ then $\{ P, X\setminus P\}$ is a closed partition of $X$. Hence $P$ or $X\setminus P$
must be compact. But $P$ is non-compact so $X\setminus P$ is compact. If $K\in \mathcal{A}$ with $K\neq P$ then $K\subset X\setminus P$. Therefore, $K$ is compact.
Moreover $\mathcal{A}$ contains finitely many sets, since $X\setminus P$ is compact and $\mathcal{A}$ is a partition of $X$ with closed-open non-empty sets.
\end{proof}

The previous lemma makes $X$ a second countable space (i.e. $X$ has a countable base):

\begin{proposition} \label{propSep}
A metrizable locally compact $J$-space has a countable base.
\end{proposition}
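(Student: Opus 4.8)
The plan is to reduce the statement to showing that $X$ is Lindel\"of, since for a metrizable space the properties ``Lindel\"of'', ``separable'' and ``second countable'' coincide. If $X$ is compact there is nothing to prove, so assume $X$ is non-compact. The pseudo-components of $X$ form a partition of $X$ into non-empty closed-open sets, so Lemma~\ref{lemClopPart} applies and produces finitely many pseudo-components, exactly one of which — call it $P$ — is non-compact, with $X\setminus P$ compact. Since a finite union of Lindel\"of subspaces is Lindel\"of and $X\setminus P$ is compact, it suffices to prove that $P$ is Lindel\"of, and I shall prove the stronger statement that $P$ is $\sigma$-compact.

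Fix $x\in P$, so that $P=C_x$, and recall that any closed ball of radius $<\rho(x)$, being a closed subset of a relatively compact open ball, is compact. If the radius of compactness is identically $+\infty$, then $X=\bigcup_{n\in\mathbb{N}}\overline{B}(x,n)$ is $\sigma$-compact, hence second countable, and we are done. So assume $\rho(x)<+\infty$; then $\rho$ is everywhere finite, everywhere positive, and $1$-Lipschitz. Put $F_x:=\{z\in X\mid x\mathcal{R}^*z\}$, the set of endpoints of $\mathcal{R}$-chains starting at $x$. By the definition of $\mathcal{E}$ we have $C_x\subseteq F_x$, so once we know that $F_x$ is $\sigma$-compact it follows that $F_x$ is a $\sigma$-compact metrizable space, hence second countable, hence so is its subspace $P=C_x$; in particular $P$ is Lindel\"of, which together with the previous paragraph completes the proof.

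It remains to prove that $F_x$ is $\sigma$-compact. Write $F_x=\bigcup_{n\ge 0}F_x^{(n)}$, where $F_x^{(0)}=\{x\}$ and $F_x^{(n+1)}=F_x^{(n)}\cup\bigcup_{y\in F_x^{(n)}}B(y,\rho(y))$, so $F_x^{(n)}$ is exactly the set reachable from $x$ by $\mathcal{R}$-chains of length at most $n$. As a countable union of $\sigma$-compact sets is $\sigma$-compact, it is enough to prove by induction that each $F_x^{(n)}$ is $\sigma$-compact, and the inductive step reduces to: \emph{if $S\subseteq X$ is $\sigma$-compact, then $\bigcup_{y\in S}B(y,\rho(y))$ is $\sigma$-compact}. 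Writing $S$ as an increasing union of compact sets, this reduces to a single compact set $L$; there $\rho$ has a positive minimum on $L$, so $\bigcup_{y\in L}B(y,\rho(y))=\bigcup_{j}\bigcup_{y\in L}\overline{B}(y,\rho(y)-1/j)$ for all sufficiently large $j$, and it suffices to show that each set $W_j:=\bigcup_{y\in L}\overline{B}(y,\rho(y)-1/j)$ is compact. I would check this by sequential compactness: given a sequence $z_i\in\overline{B}(y_i,\rho(y_i)-1/j)$ with $y_i\in L$, pass to a subsequence with $y_i\to y\in L$; by continuity of $\rho$ we get $d(z_i,y)\le\rho(y_i)-1/j+d(y_i,y)\to\rho(y)-1/j$, so the $z_i$ eventually lie in the compact ball $\overline{B}(y,\rho(y)-1/(2j))$, and a further subsequence converges to a point of $\overline{B}(y,\rho(y)-1/j)\subseteq W_j$.

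The main obstacle is exactly this compactness claim for $W_j$. In a general metric space — which may be totally disconnected or very far from geodesic — a union of compact balls over a compact set of centers need not be compact, and $\overline{B}(x,\rho(x))$ itself may fail to have compact closure; what makes things work is that we only ever use balls of radius \emph{strictly} below the radius of compactness, together with the $1$-Lipschitz bound on $\rho$, which is precisely what produces the uniform control on $d(z_i,y)$ above. A second subtlety, and the reason for passing to the auxiliary set $F_x$ instead of exhausting $C_x$ by compacta directly, is that $C_x$ need not equal the closure of $B(x,\rho(x))$ and $B(y,\rho(y))$ need not be contained in $C_x$ even when $y\in C_x$; enlarging $C_x$ to the $\mathcal{R}$-forward-closed set $F_x$ sidesteps this, at the price of invoking the heredity of second countability at the end.
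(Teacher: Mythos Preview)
Your argument is correct but takes a much longer road than the paper's three-line proof, which simply observes that a locally compact metrizable space is locally separable, invokes Sierpi\'nski's theorem (every locally separable metrizable space is a disjoint union of open separable subspaces), applies Lemma~\ref{lemClopPart} to that partition to get finitely many pieces, and concludes that $X$ is separable and hence second countable. You instead apply Lemma~\ref{lemClopPart} to the pseudo-component partition (after implicitly fixing a compatible metric) and then show, by an inductive exhaustion exploiting the Lipschitz control on $\rho$, that the forward $\mathcal{R}$-reachable set $F_x\supseteq P$ is $\sigma$-compact. What your route buys is self-containment --- no external appeal to Sierpi\'nski --- and, as a byproduct, the $\sigma$-compactness of the non-compact pseudo-component; the cost is substantially more work. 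Two small wording points: you should say explicitly that you fix a compatible metric before using $\rho$ and pseudo-components, since the proposition is stated for metrizable spaces; and in the finite-$\rho$ case you actually obtain second countability of $P$ via heredity from $F_x$ rather than $\sigma$-compactness of $P$ directly, so the sentence promising ``the stronger statement that $P$ is $\sigma$-compact'' slightly oversells what is literally delivered.
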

\begin{proof}
Sierpinski has proved in \cite{sierp} that every metrizable locally separable space X can be represented as a disjoint union of open separable subsets. Then Lemma
\ref{lemClopPart} implies that we have here only finitely many of these sets, and hence, $X$ is second countable.
\end{proof}

The proof of Theorem \ref{main} is heavily based on the next proposition. Its proof can be found in \cite[Theorem 1.3]{clopen} but we repeat it here for the
convenience of the reader.

\begin{proposition}\label{pseudo}
Let $(X,d)$ be a locally compact metric space and let $G$ denote its group of isometries. Let $x,y\in X$ and a net $\{g_i\}$ in $G$ such that $g_ix\to y$. Then there
exist a subnet $\{g_j\}$ of $\{g_i\}$ and a map $f:C_x\to X$ which preserves the distance such that $g_{j}\to f$ pointwise on $C_x$, $f(x)=y$ and $f(C_x)=C_{f(x)}$,
where $C_x$ and $C_y$ denote the pseudo-components of $x$ and $y$ respectively. In the case where $X$ has, moreover, a countable base and $\{ g_i\}$ is a sequence,
then there exist a subsequence $\{ g_{i_k}\}$ of $\{ g_i\}$ and a map $f:C_x\to X$ which preserves the distance such that $g_{i_k}\to f$ pointwise on $C_x$, $f(x)=y$
and $f(C_x)=C_{f(x)}$.

\end{proposition}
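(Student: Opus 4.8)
The plan is to pass to a subnet along which $\{g_i z\}$ converges for \emph{every} $z\in C_x$ simultaneously, to define $f(z)$ as this limit, and then to identify the image $f(C_x)$, using throughout that the radius of compactness $\rho$ is $G$-invariant and, being $1$-Lipschitz when finite, continuous. First I would replace $\{g_i\}$ by a universal subnet $\{g_j\}$, recalling that a universal net that is eventually contained in a compact set converges. The core claim, proved by induction on $n$, is: \emph{if $w\in C_x$ admits a directed chain $x=u_0\mathcal{R}u_1\mathcal{R}\cdots\mathcal{R}u_n=w$, then $\{g_j w\}$ is eventually contained in a compact subset of $X$} (hence converges, by universality). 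For $n=0$ this follows from $g_jx\to y$ and $\rho(y)=\lim_j\rho(g_jx)=\rho(x)$: the net is eventually in $\overline{B(y,\delta)}$ for any $\delta<\rho(y)$, a compact set. For the inductive step, by the inductive hypothesis $g_ju_{n-1}\to p:=\lim_j g_j u_{n-1}$, so $\rho(p)=\rho(u_{n-1})>d(u_{n-1},u_n)$; since $d(g_ju_{n-1},g_ju_n)=d(u_{n-1},u_n)$ and $g_ju_{n-1}\to p$, the net $\{g_ju_n\}$ is eventually in $\overline{B(p,\delta)}$ for a suitable $\delta$ with $d(u_{n-1},u_n)<\delta<\rho(p)$, a compact set. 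Since every $z\in C_x$ admits such a chain, this lets us define $f(z):=\lim_j g_j z\in X$.

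That $f$ preserves the distance is immediate from $d(f(z),f(w))=\lim_j d(g_jz,g_jw)=d(z,w)$ for $z,w\in C_x$, and $f(x)=y$ since $g_jx\to y$. Moreover $\rho(f(z))=\lim_j\rho(g_jz)=\rho(z)$ for every $z\in C_x$, so $f$ carries $\mathcal{R}$-edges lying in $C_x$ to $\mathcal{R}$-edges: if $u,v\in C_x$ with $d(u,v)<\rho(u)$, then $d(f(u),f(v))=d(u,v)<\rho(u)=\rho(f(u))$. Each $z\in C_x$ is linked to $x$ by directed chains inside $C_x$ (in both directions, by definition of $C_x$), all of whose vertices lie in $C_x$; applying $f$ to these chains yields $f(x)\mathcal{R}^*f(z)$ and $f(z)\mathcal{R}^*f(x)$, hence $f(C_x)\subseteq C_{f(x)}$.

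For the reverse inclusion I would apply the part just proved to the inverse net. Since $d(g_j^{-1}y,x)=d(y,g_jx)\to 0$, we have $g_j^{-1}y\to x$, so $\{g_j^{-1}\}$ admits a subnet (passing to which does not affect $g_j z\to f(z)$ on $C_x$) together with a distance-preserving map $h\colon C_y\to X$ such that $g_j^{-1}\to h$ pointwise on $C_y$, $h(y)=x$, and $h(C_y)=C_{h(y)}=C_x$. For $w\in C_y$ we have $h(w)\in C_x$, so $g_j(h(w))\to f(h(w))$; but also $d(g_j(h(w)),w)=d(h(w),g_j^{-1}w)\to 0$, whence $g_j(h(w))\to w$ and therefore $f(h(w))=w$. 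Thus $f\circ h=\mathrm{id}_{C_y}$, so $C_{f(x)}=C_y=f(h(C_y))\subseteq f(C_x)$, and with the previous paragraph $f(C_x)=C_{f(x)}$.

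Finally, when $X$ has a countable base, $C_x$ is separable; fixing a countable dense set $\{z_m\}\subset C_x$ and a directed chain from $x$ to each $z_m$, the induction above becomes a \emph{finite} induction along each fixed chain, so ordinary subsequences suffice, and a diagonal argument over $m$ gives a subsequence $\{g_{i_k}\}$ for which $g_{i_k}z_m$ converges for every $m$. Since the $g_{i_k}$ are isometries they are equicontinuous, so $\{g_{i_k}z\}$ is Cauchy for every $z\in C_x$; as it also has a convergent subsequence (the one-point case of the induction), it converges, and one concludes exactly as above. The one genuinely delicate point is arranging the estimate of the first paragraph to hold \emph{simultaneously} for all of $C_x$ — which is precisely why I would pass to a universal subnet in the net case (so the induction runs along one fixed net) and restrict attention to a countable dense subset in the sequential case.
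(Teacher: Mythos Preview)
Your argument is correct and reaches the same conclusion, but the mechanics differ from the paper's in two places. First, the paper does not pass to a universal subnet; instead it introduces the set $K(F)=\{z\in X:\overline{Fz}\text{ is compact}\}$ for a suitable tail $F$ of the net, observes (via essentially the same one-step estimate you use in your inductive step) that $B(x,\rho(x))\subset K(F)$ and hence $C_x\subset K(F)$, and then invokes the Arzel\`a--Ascoli theorem on $\{g_i|_{K(F)}\}\subset C(K(F),X)$ to extract a pointwise-convergent subnet. Second, for the sequential statement the paper cites the metrizability and second countability of $C(K(F),X)$ rather than running a diagonal argument over a countable dense subset of $C_x$. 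Your route is more hands-on and avoids function-space topology, making the role of $\mathcal{R}$-chains explicit; the paper's route is shorter once Arzel\`a--Ascoli is on the table and handles the convergence on all of $K(F)\supset C_x$ in one stroke.

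One small slip worth noting: when you turn to the inverse net you write $h(C_y)=C_{h(y)}=C_x$, but at that stage only the inclusion $h(C_y)\subset C_{h(y)}$ has been established. This is harmless, since your subsequent computation $C_{f(x)}=C_y=f(h(C_y))\subset f(C_x)$ uses nothing more than $h(C_y)\subset C_x$.
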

\begin{proof}
Let $F$ be a subset of $G$. We define $K(F)$ to be the set
\[
K(F):=\{ x\in X\, |\,\,\mbox{the set}\,\, Fx\,\,\mbox{has compact closure in}\,\, X\}.
\]
Each $K(F)$ is a closed-open subset of $X$ (see \cite[Lemma 3.1]{manstra}, \cite{str1}).

Let $x, y\in X$ and $\{ g_i\}$ be a net in $G$ with $g_ix\to y$.  Since $X$ is locally compact there exists an index $i_0$ such that the set $F(x)$, where $F:=\{
g_i\,|\,i\geq i_0\}$, has compact closure. We claim that for every $z\in C_x$ the set $F(z)$ has, also, compact closure in $X$, hence $C_x\subset K(F)$. The strategy
is to start with an open ball $B(x,r)$ centered at $x$ with radius $r<\rho (x)$, where $\rho (x)$ is the radius of compactness of $x$ and prove that $F(z)$ has
compact closure for every $z\in B(x,r)$. Then, our claim follows just from the definition of the pseudo-component of $x$. To prove the claim take a sequence $\{g_n
z\}$, with $g_n\in F$ for every $n\in\mathbb{N}$. Since the closure of $F(x)$ is compact we may assume, without loss of generality, that $g_nx\to w$ for some $w$ in
the closure of $F(x)$. Assume that $\rho (x)$ is finite and take a positive number $\varepsilon$ such that $r+\varepsilon < \rho (x)$. Then for $n$ big enough
\[
d(g_nz,w)\leq d(g_nz,g_nx) + d(g_nx,w)=d(z,x)+ d(g_nx,w)<r+\varepsilon < \rho (x).
\]
Recall that the radius of convergence is a continuous map, and since $g_nx\to w$ then $ \rho (x)= \rho (w)$. So, the sequence $\{g_n z\}$ is contained eventually in a
ball of $w$ with compact closure, hence it has a convergence subsequence. The same also holds in the case where $\rho (x)=+\infty$ and the claim is proved.

Set $A:=K(F)$. By \cite[Lemma 3.1]{manstra} $A$ is a closed-open subset of $X$. If $g_i|_A$ denotes the restriction of each $g_i$ on $A$, then the Arzela-Ascoli
theorem implies that the set $\{ g_i|_A:A\to X\, |\, i\geq i_0 \}$ has compact closure in $C(A,X)$ (this the set of all continuous maps from $A$ to $X$). Thus, there
exist a subnet $\{g_j\}$ of $\{g_i\}$ and a map $f:A\to X$ with $f(x)=y$ which preserves the distance such that $g_j\to f$ pointwise on $A$. Hence, $g_{j}\to f$
pointwise on $C_x$. If, moreover, $X$ has a countable base then it is $\sigma$-compact, i.e. it can be written as a countable union of compact subsets. Since $A=K(F)$
is a closed-open subset of $X$ then it also a $\sigma$-compact locally compact metrizable space. Hence, by \cite[Theorem 5.2 p. 265 and 8.5 p. 272]{doug}, $C(A,X)$ is
a metrizable space with a countable base. So, if $\{ g_i\}$ is a sequence there exist a subsequence $\{ g_{i_k}\}$ of $\{ g_i\}$ and a map $f:C_x\to X$ which
preserves the distance such that $g_{i_k}\to f$ pointwise on $C_x$.

Let us show that $f(C_x)=C_{f(x)}$. Since $d(x,g_j^{-1}f(x))=d(g_jx,f(x))$ and $d(g_jx,f(x))\to 0$ it follows that $g_j^{-1}f(x)\to x$. Repeating the previous
procedure, there exist a subnet $\{g_k\}$ of $\{g_j\}$ and a map $h:C_{f(x)}\to X$ which preserves the distance such that $g_{k}^{-1}\to h$ pointwise on $C_{f(x)}$
and $h(f(x))=x$. Note that since $g_{k}x\to f(x)$ and the pseudo-component $C_{f(x)}$ is a closed-open subset of $X$ then $g_{k}x \in C_{f(x)}$ eventually for every
$k$. Therefore, $g_kC_x=C_{g_{k}x}=C_{f(x)}$. Take a point $z\in C_x$. Then $g_{k}z\to f(z)$ and since the pseudo-component $C_{f(x)}$ is a closed-open subset of $X$
then $f(z)\in C_{f(x)}$, so $f(C_x)\subset C_{f(x)}$. In a similar way and repeating the same arguments as before it follows that $hC_{f(x)}\subset C_x$. Take now a
point $w\in C_{f(x)}$. Then $h(w)\in C_x$, hence $g_{k}^{-1}(w)\in C_x$ eventually for every $k$. So, $w=g_{k}g_{k}^{-1}(w)\to f(h(w))\in f(C_x)$ from which follows
that $C_{f(x)}\subset f(C_x)$.
\end{proof}

\begin{proof}[Proof of Theorem \ref{main}]
(i) Since every pseudo-component is a closed-open subset of $X$ we can apply Lemma \ref{lemClopPart} for the family of the pseudo-components of $X$. Hence, $X$ has
finitely many pseudo-components exactly one of which, say $P$, is not compact and its complement $X\setminus P$ is a compact subset of $X$. Take any $g\in G$. Then
$gP$ is a non-compact pseudo-component hence $gP=P$. This shows that $P$ is $G$-invariant. Then, by \cite[Corollary 6.2]{kechris}, $G$ is locally compact since $X$
has finitely many pseudo-components.

(ii) We shall show that $G$ acts properly on $P$. By Proposition \ref{propSep}, the space $(X,d)$ has a countable base. Hence, as we mentioned in the proof of
Proposition \ref{pseudo}, by \cite[Theorem 5.2 p. 265 and 8.5 p. 272]{doug}, $G$ is a metrizable locally compact group with a countable base. So, if we would like to
check if $G$ acts properly on $P$ it is enough to consider sequences in $G$ instead of nets. Assume that there are points $x,y\in P$ and a sequence $\{ g_n\}$ in $G$
with $g_nx\to y$. Let us denote by $\{ P, C_1, C_2, \ldots, C_k\}$ the pseudo-components of $X$. Each pseudo-component $C_i$, $i=1,\ldots,k$ is compact. Choose points
$x_i\in C_i$, $i=1,\ldots,k$. Since $X\setminus P$ is compact we may assume that there exist points $y_i\in X\setminus P$, $i=1,\ldots,k$ and a subsequence $\{
g_{n_{l}}\}$ of $\{ g_n\}$ such that $g_{n_{l}}x_i\to y_i$ for every $i=1,\ldots,k$. Since by Proposition \ref{propSep}, $X$ has a countable base then, by Proposition
\ref{pseudo}, there is a subsequence of $\{ g_{n_{m}}\}$ of $\{ g_n\}$ and a map $f:X\to X$ which preserves the distance such that $g_{n_{m}}\to f$ pointwise on $X$.
Note that $g_{n_{m}}^{-1}y\to x\in P$, since $d(g_{n_{m}}^{-1}y,x)=d(y,g_{n_{m}}x)$. Repeating the previous arguments we conclude that there exists a map $h:X\to X$
and a subsequence $\{ g_{n_{m_p}}\}$ of $\{ g_{n_{m}}\}$ such that $g_{n_{m_p}}^{-1}\to h$ pointwise on $X$ and $h$ preserves the distance. Obviously $h$ is the
inverse map of $f$, hence $f\in G$ and $G$ acts properly on $P$. The group $G$ may fail to act properly on $X\setminus P$. As an example we may take as $X=P\cup S
\subset\mathbb{R}^3$, where $P$ is the plane $\{ (x,y,0))\, |\, x,y\in\mathbb{R}\}$ and $S$ is the circle $\{ (x,y,2)\, |\, x^2+y^2=1\}$. We endow $X$ with the metric
$d=\min \{ d_E,1\}$, where $d_E$ is the usual Euclidean metric on $\mathbb{R}^3$. Then the action of $G$ on $S$ is not proper, since for a point $x\in S$ the isotropy
group $G_x:=\{ g\in G\,|\, gx=x\}$ is not compact.
\end{proof}

\begin{remark}
If $G$ does not act properly on $X\setminus P$ one may ask if the orbits on $X\setminus P$ are closed or if the isotropy groups of points $x\in X\setminus P$ are
non-compact. The answer is negative in general. As an example we may consider the example in \cite{abman1}.  In this paper we constructed a one-dimensional manifold
with two connected components, one compact isometric to $S^1$, and one non-compact, the real line with a locally Euclidean metric. It has a complete metric whose
group of isometries has non-closed dense orbits on the compact component. We can regard the real line as a distorted helix with a locally Euclidean metric. The
problem is that this manifold has two ends. But this is not really a problem. Following the same arguments as in \cite{abman1} we can replace the distorted helix by a
small distorted helix-like stripe and have a space with one end and two connected components, one compact isometric to $S^1$, and one non-compact with a locally
Euclidean metric so that the group of isometries has non-closed dense orbits on the compact component.
\end{remark}

\noindent \textbf{Acknowledgements.} We would like to thank the referee for an extremely careful reading of the manuscript and her/his valuable remarks and comments.

\end{document}